\newtheorem{theorem}{\textbf{Theorem}}
\newtheorem{definition}{\textbf{Definition}}
\newtheorem{lemma}{\textbf{Lemma}}
\newtheorem{proposition}{\textbf{Proposition}}
\newtheorem{remark}{\textbf{Remark}}
\begin{document}

\title[On $\mathbb{P}$-Weakly Hyperbolic Iterated Function Systems]{ On $\mathbb{P}$-Weakly Hyperbolic Iterated Function Systems}

\author[\'Italo Melo]{\'Italo Melo}


\begin{abstract}
In this paper we will consider the
concept of $\mathbb{P}$-weakly hyperbolic iterated function systems on compact metric spaces that generalizes the concept of weakly hyperbolic iterated function systems, as defined by Edalat in \cite{E} and by Arbieto, Santiago and Junqueira in \cite{ASJ} for a more general setting where the parameter space is a compact metric space. We prove the existence and uniqueness of the invariant measure of a $\mathbb{P}$-weakly hyperbolic IFS. Furthermore, we prove  an ergodic theorem  for  $\mathbb{P}$-weakly hyperbolic IFS with compact parameter space.


\medskip

\noindent\textit{$\bf{Key \ words:}$} Iterated function systems, invariant measure, ergodic theorem. 

\vspace{.2cm}


\medskip

\begin{center}Universidade Federal do Piau\'{i}, Departamento de
Matem\'{a}tica, 64049-550, Ininga - Teresina - PI, Brasil\\
e-mail: italodowell@ufpi.edu.br\\
\end{center}

\end{abstract}

\maketitle

\section{Introduction}

Iterated function systems (IFS) were introduced in \cite{HU} although some results appeared earlier in \cite{W}. The theory of IFS has found applications in diverse \linebreak scientific areas, like as image compression  \cite{B1}.

In the famous paper \cite{HU}, Hutchinson  considers hyperbolic Iterated function systems, i.e, a finite collection of contractions $\varphi_1,...,\varphi_N: X \rightarrow X$, where $X$ is a complete metric space. He proved that there is a only compact set $K$, called the invariant set of the IFS, such that
$$K = \displaystyle\bigcup_{i=1}^n \varphi_i(K).$$

He also proved that there exists a unique probability Borel measure $\mu$, called the invariant measure, satisfying the equation
$$\mu = \displaystyle\sum_{i=1}^N p_i {\varphi_i}_{*}\mu,$$
where $*$ is the push-forward operator and  $p_1,...,p_N$ are positives numbers such that $p_1+ \cdot \cdot \cdot + p_N= 1$. Furthermore, ${\rm supp}\, (\mu) = K$. 
After these results, many authors generalized  the Hutchinson's results assuming a weak form of contraction, see for example \cite{Dem}, \cite{ET} and \cite{MA} .


In \cite{E}, Edalat introduced the notion of a weakly hyperbolic IFS on a compact metric space $X$ that generalizes the notion of a hyperbolic IFS. In this context he generalized the Hutchinson's results.



Another natural generalization of a hyperbolic IFS with finite maps is related with the parameter space. Hata in \cite{HATA} considered a parameter space infinite countable where the maps are weak contractions and 
Mendivil in \cite{Me} studied the \linebreak invariant measures for a IFS with compact parameter space where the maps are average contractions. 	


 Arbieto et al. in \cite{ASJ}, study weakly IFS on compact metric spaces, but in the more general setting of a compact parameter space, thus
unifying and extending some of the previous results. In particular, they generalize the Edalat's results.


 They proved the existence of attractors, both in the topological and measure theoretical viewpoint and defined weakly hyperbolic iterated function systems for complete spaces and compact parameter space. Furthermore, they studied the question of existence of the attractors in this setting and also proved a version of the results \cite{BV}, about drawing the attractor (also called the chaos game), for the case of compact parameter space. 


In \cite{DM}, D\'iaz and Matias study iterated function systems with finite maps that are not weakly hyperbolic but that has some hyperbolic condition and they provides a necessary and sufficient condition for the existence of a globally attracting fixed point of the Barnsley-Hutchinson operator. Furthermore, for a recurrent IFS defined on $[0,1]$ they study the asymptotic stability of the Markov operator associated to this IFS.
  
  
  Elton proved in \cite{ELTON} an ergodic theorem for a IFS contractive on average with parameter space finite. This theorem was extended by Barnsley, Elton and Hardin in \cite{B3} to recurrent IFS and by Stenflo in \cite{St} for a general IFS with time-dependent probabilities. Cong and Siegmund in \cite{ET} proved an ergodic theorem for iterated function systems on compact metric spaces with countably
  many functions where the maps are contractive on average.
  
  Recently in \cite{ASJ} the authors proved an ergodic theorem for a weakly hyperbolic IFS with compact parameter space.

  In this paper we will consider the
  concept of $\mathbb{P}$-weakly hyperbolic iterated function systems on compact metric spaces with compact parameter space that generalizes the concept of weakly hyperbolic iterated function systems. We prove the existence and uniqueness of the invariant measure of a $\mathbb{P}$-weakly hyperbolic.\linebreak Furthermore, we get an ergodic theorem  for  $\mathbb{P}$-weakly hyperbolic IFS with compact parameter space.

\section{Notation and Preliminaries}
In this section we present the notations and some preliminaries results that will be used in the sequel.

\subsection{Iterated function system}\label{OP1}
An iterated function system (IFS) on a \linebreak metric space $X$ is given by a finite set of continuous map $f_i:X \rightarrow X $ with \linebreak $i=1,...,N$. An IFS is hyperbolic if $X$ is a complete metric space and all the maps are contractions.

We denote by $\mathcal{K}(X)$ the class of all compact non-empty subsets of $X$ and by $d_H$ the Hausdorff metric,  it is know that  $\mathcal{K}(X)$ is a complete metric space with respect to the metric $d_H$ if $X$ is a complete metric space.

A hyperbolic IFS induces the operator $\mathcal{F}: \mathcal{K}(X) \rightarrow \mathcal{K}(X)$ defined by
$$\mathcal{F}(A) = \displaystyle\bigcup_{i=1}^N  f_i(A).$$

This operator is know like Barnsley-Hutchinson operator. Hutchinson proved in \cite{HU} that $\mathcal{F}$ is a contraction thus by Banach contraction principle the operator $\mathcal{F}$ has a unique fixed point $K$, called the invariant set of the IFS. Furthermore, we have

$$ K = \displaystyle\lim_{n \rightarrow \infty} \mathcal{F}^{n}(A),$$ 
in the Hausdorff metric for any $A \in \mathcal{K}(X).$


\subsection{The invariant measure of an IFS with probabilities}\label{OP3}

An iterated function system with probabilities (IFSp) on metric space $(X,d)$ is given by a finite set of continuous map $f_i:X \rightarrow X $ with $i=1,...,N$ and a vector of probability \linebreak
$p=(p_1,...,p_N)$ with $p_i > 0$. If $X$ is a complete and separable metric space, Kravchenko proved in \cite{K} that $(\mathcal{M}(X), H)$ is a complete metric space, where  $\mathcal{M}(X)$ is the set of the probability Borel measures $\mu$ such that $\displaystyle\int_X f d\mu < + \infty$ for each $f \in Lip_1(X, \mathbb{R})$,  where
$$   Lip_1(X, \mathbb{R}) = \{f:X \rightarrow \mathbb{R}: |f(x)-f(y)| \leq d(x,y) \ \ {\rm{for}} \ \ {\rm{all}}  \ \ x,y \in X\}.$$

The Hutchinson metric $L$ can be defined  on $\mathcal{M}(X)$ as follows

$$   L(\mu,\nu) = \sup \Big\{\Big| \displaystyle\int_X f d\mu - \displaystyle\int_X f d\nu \Big|  : f \in  Lip_1(X, \mathbb{R})    \Big\}.                 $$
Hutchinson proved that if the IFS is hyperbolic then for each vector of probability $p$ the Markov operator $T_p: \mathcal{M}(X) \rightarrow \mathcal{M}(X)$ given by

$$T_p(\mu) = \displaystyle\sum_{i=1}^{N}p_i {f_i}_{*}(\mu),  $$is a contraction with respect to Hutchinson metric $L$ thus $T_p$ has a unique fixed point $\mu_p$, called the invariant measure for the IFS or the stationary measure.


Let $\mathcal{M}_1(X)$ be the set of the Borel probability measures on $X$ equipped with the wea$k^{*}$ topology. If $X$ is a compact metric space then the sets $\mathcal{M}_1(X)$ and 
$\mathcal{M}(X)$ are equal. Kravchenko proved in \cite{K} that the Hutchinson topology and the wea$k^{*}$ topology are equivalent in this case.



\subsection{Weakly hyperbolic IFS}

Following the notation of \cite{ASJ}. Let $\Lambda$ and $X$ be compact metric spaces. A continuous map $\omega:\Lambda \times X \rightarrow X$ is called an Iterated Function System (IFS). The space $\Lambda$ is called the parameter space and $X$ is called the phase space. The space $\Lambda^{\mathbb{N}}$ of infinite words with alphabet in $\Lambda$, endowed with the product topology will be denoted by $\Omega$.

Given a fixed parameter $\lambda \in \Lambda$, we will denote by $\omega_{\lambda }:X \rightarrow X$ the map defined by $\omega_{\lambda}(x) = \omega(\lambda,x)$.
\begin{definition}\label{weakly}
	An IFS $\omega$ on the metric space $(X,d)$ is weakly hyperbolic if for every $\sigma \in \Omega$ we have
	$$   \displaystyle\lim_{n \rightarrow \infty} Diam(f_{\sigma_1} \circ \cdot \cdot \cdot \circ f_{\sigma_n}(X) ) = 0,         $$
	where $Diam(A) = \sup\{d(x,y): x,y \in A\}$.
\end{definition}
If $\Lambda$ is finite then this definition coincides with the definition introduced by Edalat in \cite{E}. A IFS $\omega$ induces the operator $\mathcal{F}: \mathcal{K}(X) \rightarrow \mathcal{K}(X)$ defined by
$$\mathcal{F}(A) = \displaystyle\bigcup_{\lambda \in \Lambda} \omega_{\lambda}(A).$$

 Given a probability $p$ in $\Lambda$ we denote by $\mathbb{P}$ the product measure in $\Omega$ induced by $p$, recall that the product measure $\mathbb{P}$ in $\Omega$ is an invariant measure by the shift map $\beta: \Omega \rightarrow \Omega$ defined by 
 $$   \beta(\sigma_1,\sigma_2,...) = (\sigma_2,\sigma_3,...).$$
 



 In \cite{ASJ} the authors consider the operator $T_{p}: \mathcal{M}_1(X) \rightarrow \mathcal{M}_1(X)$, defined by
$$    T_{p}(\mu)(B) =  \displaystyle\int_{\Lambda} \mu(\omega_{\lambda}^{-1}(B)) dp(\lambda),                $$
for every Borel set $B$. We say that $\mu$ is an invariant measure for $\omega$ if $\mu$ is a fixed point of the transfer operator. Observe that if $\Lambda$ is finite then this operator coincides with the Markov operator defined by Hutchinson.

In the context of weakly hyperbolic IFS on  compact metric spaces $X$ with parameter space finite Edalat in \cite{E} generalizes the Hutchinson's results. Arbieto et al. in \cite{ASJ} consider weakly hyperbolic IFS with compact parameter space and generalize the Edalat's results.

In the  Theorem 1 of \cite{ASJ} they prove the existence of global attractors for weakly hyperbolic IFS with compact parameter space. Let $p$ a probability in $\Lambda$, they also proved that each transfer operator $T_p$ has a unique fixed point $\mu_p$. Furthermore the operator $T_p$ is asymptotically stable, i.e,  $T_{p}^n (\nu) \rightarrow \mu_p$ in the $weak^{*}$ topology for every measure $\nu \in \mathcal{M}_1(X)$.

\section{$\mathbb{P}$-Weakly Hyperbolic Iterated Function Systems}
 The purpose of this subsection is to introduce the concept of $\mathbb{P}$-weakly \linebreak hyperbolic iterated function systems. Let $(X,d),(\Lambda, \rho)$ be compact metric spaces and $\omega:\Lambda \times X \rightarrow X$ an IFS.
\begin{definition}
	Fix $p \in \mathcal{M}_1(\Lambda)$. We say that the IFS $\omega$ is $\mathbb{P}$-weakly hyperbolic if $\mathbb{P}(S) = 1$, where
	$$       S = \{\sigma\in \Omega: \displaystyle\lim_{n \rightarrow \infty} Diam(\omega_{\sigma_1} \circ \cdot \cdot \cdot \circ \omega_{\sigma_n}(X) ) = 0 \} .       $$              
\end{definition} 
  Consider the functions $f_n,h_n:\Omega \rightarrow \mathbb{R}$, where 
 $$ f_n(\sigma) = Diam(  \omega_{\sigma_{n}} \circ \cdot \cdot \cdot \circ \omega_{\sigma_2} \circ\omega_{\sigma_1}(X))      $$ 
 and 
 $$h_n(\sigma) = Diam(\omega_{\sigma_1} \circ \cdot \cdot \cdot \circ \omega_{\sigma_n}(X)).$$
 
 If the IFS $\omega$ is weakly hyperbolic by Lemma 2.2 of \cite{ASJ} we have that the functions $f_n$ and $h_n$ converge uniformly to $0$. If the IFS is only $\mathbb{P}$- weakly hyperbolic we lose the uniform convergence to zero. Furthermore, in some cases there exists $\sigma \in \Omega$ such that $\displaystyle\lim_{n\to+\infty} h_n(\sigma) > 0$ or $\displaystyle\lim_{n\to+\infty} f_n(\sigma) $ does not exist.

 
  We denote by $D$ and $r$ the metrics of $\Omega$ and $\Omega \times X$, respectively. The metric $D$ is given by
 $$          D(\sigma,\xi) = \displaystyle\sup_{n \in \mathbb{N}} \displaystyle\frac{1}{n} \rho(\sigma_n,\xi_n).                                   $$

D\'iaz and Matias also study in \cite{DM} properties of the set $S$ and the asymptotic stability of the Markov operator $T_p$ for a IFS $\omega$ whose parameter space is finite. The notion of $\mathbb{P}$-weakly hyperbolic IFS generalizes the notion of weakly hyperbolic IFS since clearly a weakly hyperbolic IFS is $\mathbb{P}$-weakly hyperbolic.

If the IFS $\omega$ is weakly hyperbolic from Theorem 2 of \cite{ASJ} it follows that for each $p \in \mathcal{M}_1(\Lambda)$ the operator $T_p$ has a unique fixed point. Furthermore, the operator $T_p$ is asymptotic stable. Our first result generalizes this theorem for $\mathbb{P}$-weakly hyperbolic IFS with compact parameter space.

\begin{theorem}\label{teoo}
Fix $p \in \mathcal{M}_1(\Lambda)$. If $X$ is a compact metric space and $\omega$ is a \linebreak $\mathbb{P}$-weakly hyperbolic IFS with compact parameter space then the operator \linebreak $T_p: \mathcal{M}_1(X) \rightarrow \mathcal{M}_1(X)$ has a unique fixed point $\mu_p$ and $T_{p}^n (\nu) \rightarrow \mu_p$ in the $weak^{*}$ topology for every, $\nu \in \mathcal{M}_1(X)$. Furthermore, if $p(U) > 0$ for every open set $U \subset \Lambda$ then ${\rm supp}\,(\mu) = K$, where $K = \overline{\Gamma(S)}$.
\end{theorem}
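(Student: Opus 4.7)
The plan is to realize the invariant measure as the push-forward of $\mathbb{P}$ under the address map on $S$, and to extract both convergence and uniqueness from a single dominated-convergence argument. For every $\sigma\in S$ the compact sets $K_n(\sigma):=\omega_{\sigma_1}\circ\cdots\circ\omega_{\sigma_n}(X)$ are nested decreasing (because $\omega_{\sigma_{n+1}}(X)\subset X$) with diameters $h_n(\sigma)\to 0$, so they shrink to a single point $\Gamma(\sigma)$, and $\omega_{\sigma_1}\circ\cdots\circ\omega_{\sigma_n}(x)\to\Gamma(\sigma)$ uniformly in $x\in X$. Each finite-step map $\sigma\mapsto\omega_{\sigma_1}\circ\cdots\circ\omega_{\sigma_n}(x_0)$ is continuous on $\Omega$, so the $\mathbb{P}$-a.e.\ limit $\Gamma\colon S\to X$ is Borel measurable, and I would set $\mu_p:=\Gamma_*(\mathbb{P}|_S)\in\mathcal{M}_1(X)$.

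Next I would compute, by Fubini, that for every $\nu\in\mathcal{M}_1(X)$ and $\varphi\in C(X)$,
\[
\int_X\varphi\,d(T_p^n\nu)=\int_{\Omega}\int_X\varphi\bigl(\omega_{\sigma_1}\circ\cdots\circ\omega_{\sigma_n}(x)\bigr)\,d\nu(x)\,d\mathbb{P}(\sigma).
\]
On $S$ (full $\mathbb{P}$-measure) the inner integrand converges uniformly in $x$ to $\varphi(\Gamma(\sigma))$ and is bounded by $\|\varphi\|_\infty$, so dominated convergence gives $\int\varphi\,d(T_p^n\nu)\to\int_\Omega\varphi\circ\Gamma\,d\mathbb{P}=\int\varphi\,d\mu_p$, i.e.\ $T_p^n\nu\to\mu_p$ in weak$^*$. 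Because $\omega$ is continuous and $\Lambda$ compact, the averaged symbol $\psi_\varphi(x):=\int_\Lambda\varphi(\omega_\lambda(x))\,dp(\lambda)$ is continuous on $X$, so $T_p$ is weak$^*$-continuous; passing to the limit in $T_p^{n+1}\nu=T_p(T_p^n\nu)$ then forces $T_p\mu_p=\mu_p$. Uniqueness is immediate: any fixed point $\mu$ satisfies $\mu=T_p^n\mu\to\mu_p$.

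For the support statement, $\operatorname{supp}(\mu_p)\subset K=\overline{\Gamma(S)}$ is immediate because $\mu_p$ is carried by $\Gamma(S)$. For the reverse inclusion, given $x=\Gamma(\sigma)\in\Gamma(S)$ and an open neighbourhood $V\ni x$, pick $n$ with $K_n(\sigma)\subset V$. Uniform continuity of $\omega$ on $\Lambda\times X$ makes $(\lambda_1,\ldots,\lambda_n)\mapsto\omega_{\lambda_1}\circ\cdots\circ\omega_{\lambda_n}(X)$ continuous into $(\mathcal{K}(X),d_H)$, so there are open sets $U_i\ni\sigma_i$ with $K_n(\xi)\subset V$ whenever $\xi_i\in U_i$ for $i\le n$. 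The cylinder $C:=\{\xi\in\Omega:\xi_i\in U_i,\,i\le n\}$ has $\mathbb{P}(C)=\prod_{i=1}^n p(U_i)>0$ by the positivity hypothesis, and any $\xi\in C\cap S$ (still of positive $\mathbb{P}$-measure, since $\mathbb{P}(S)=1$) satisfies $\Gamma(\xi)\in K_n(\xi)\subset V$, whence $\mu_p(V)\ge\mathbb{P}(C\cap S)>0$.

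The main obstacle is the loss of the uniform convergence $h_n\to 0$ that was available in the purely weakly hyperbolic case of \cite{ASJ}: here convergence holds only off a $\mathbb{P}$-null set, so every limit argument has to take place inside the integral against $\mathbb{P}$. Dominated convergence absorbs this in the transfer operator step, but the support argument must now be done one "good" $\sigma$ at a time, using Hausdorff continuity of finite compositions in place of a uniform shrinkage; this is the one place where a small extra continuity input has to be supplied by hand.
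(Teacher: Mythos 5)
Your proof is correct, but it takes a genuinely different and more economical route than the paper. The paper first uses Egorov's theorem to recover near-uniform convergence of $h_n$ on a large set, shows that $\bigl(\int\varphi\,d(T_p^n\nu)\bigr)_n$ is Cauchy, and invokes Riesz--Markov to get a limit measure --- which is a priori $\nu$-dependent, so uniqueness must be established separately. That separate step is heavy: it passes to the skew product $\Phi$, introduces the set $F$ of words along which the Ces\`aro averages of $Diam(\omega_{\sigma_i}\circ\cdots\circ\omega_{\sigma_1}(X))$ vanish, proves $\int f_n\,d\mathbb{P}=\int h_n\,d\mathbb{P}$, applies Kingman's subadditive ergodic theorem to show $\mathbb{P}(F)=1$, and then runs a Birkhoff argument to get uniqueness of the $\Phi$-invariant lift of $\mathbb{P}$. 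You instead identify the limit explicitly as $\mu_p=\Gamma_*(\mathbb{P}|_S)$ and obtain convergence for every $\nu$ in one dominated-convergence step, after which uniqueness is the one-line observation $\mu=T_p^n\mu\to\mu_p$; your support argument likewise replaces the paper's appeal to continuity of $\Gamma$ on $S$ (borrowed from the uniformly weakly hyperbolic case) by Hausdorff-continuity of the finite compositions, which is cleaner in the non-uniform setting. What your shortcut does not buy is the $F$-set technology (Lemma \ref{lemakey} and Proposition \ref{pro1}): the paper needs the Ces\`aro control on the reversed compositions $\omega_{\sigma_j}\circ\cdots\circ\omega_{\sigma_1}$ for the ergodic theorem (Theorem \ref{ergodic}), and that is where Kingman and the identity $\int f_n\,d\mathbb{P}=\int h_n\,d\mathbb{P}$ are really doing their work; so the extra machinery is not wasted, it is just not needed for Theorem \ref{teoo} itself.
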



In the next result we get an ergodic theorem for $\mathbb{P}$-weakly hyperbolic IFS that generalizes the Theorem 3 of \cite{ASJ}.


 
  \begin{theorem}\label{ergodic}(Ergodic Theorem for $\mathbb{P}$-weakly hyperbolic IFS)
 Fix $p \in \mathcal{M}_1(\Lambda)$. If the IFS $\omega$ is $\mathbb{P}$-Weakly Hyperbolic then for any continuous function $f:X \rightarrow \mathbb{R}$, any $x \in X$ and $\mathbb{P}$-almost every $\sigma \in \Omega$ we have:
 	
 	$$  \displaystyle\lim_{n\to+\infty}    \displaystyle\frac{1}{n} \displaystyle\sum_{j=1}^{n} f(  \omega_{\sigma_j} \circ \cdot \cdot \cdot \circ \omega_{\sigma_1}(x)   )     = \displaystyle\int_{X} f d\mu_p,                                                                   $$ 
 	where $\mu_p$ is the unique invariant measure associated to the operator $T_p$.
 \end{theorem}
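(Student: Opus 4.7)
The plan is to reduce the statement to Birkhoff's ergodic theorem applied to a skew-product map over the one-sided Bernoulli shift. Define $F:\Omega\times X\to\Omega\times X$ by $F(\sigma,x)=(\beta\sigma,\omega_{\sigma_1}(x))$, so that iteration gives
$$F^{n}(\sigma,x)=\bigl(\beta^{n}\sigma,\;\omega_{\sigma_n}\circ\cdots\circ\omega_{\sigma_1}(x)\bigr),$$
which matches the ergodic average appearing in the theorem. For $\tilde f(\sigma,x):=f(x)$, the desired sum is $\sum_{j=1}^n \tilde f(F^j(\sigma,x))$. I would first verify that the product measure $\mathbb{P}\times\mu_p$ is $F$-invariant: for a Borel rectangle $A\times B$, using that $\sigma_1$ is independent of $\beta\sigma$ under the Bernoulli measure $\mathbb{P}=p^{\mathbb{N}}$ and that $\mu_p$ is fixed by $T_p$,
$$(\mathbb{P}\times\mu_p)\bigl(F^{-1}(A\times B)\bigr)=\mathbb{P}(A)\int_{\Lambda}\mu_p(\omega_\lambda^{-1}(B))\,dp(\lambda)=\mathbb{P}(A)\,T_p(\mu_p)(B)=\mathbb{P}(A)\mu_p(B).$$

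Applying Birkhoff's theorem to $F$ and $\tilde f$ produces an $F$-invariant function $\tilde f^*\in L^1(\mathbb{P}\times\mu_p)$ such that the Cesàro averages converge to $\tilde f^*(\sigma,x)$ for $(\mathbb{P}\times\mu_p)$-almost every $(\sigma,x)$. The next step is to remove the dependence on $x$. For $\sigma\in S$ and any $x,y\in X$,
$$\bigl|f(\omega_{\sigma_j}\!\circ\!\cdots\!\circ\!\omega_{\sigma_1}(x))-f(\omega_{\sigma_j}\!\circ\!\cdots\!\circ\!\omega_{\sigma_1}(y))\bigr|\le \operatorname{osc}_{f_j(\sigma)}(f)\xrightarrow[j\to\infty]{}0$$
by uniform continuity of $f$ on the compact space $X$ combined with $f_j(\sigma)=\mathrm{Diam}(\omega_{\sigma_j}\circ\cdots\circ\omega_{\sigma_1}(X))\to 0$. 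Hence for every $\sigma\in S$ the Cesàro averages for two different starting points $x,y$ differ by a sequence tending to $0$, so if the limit exists for one $x$ it exists and coincides for every $x$. Combined with Fubini applied to the Birkhoff a.e.\ set and the fact that $\mathbb{P}(S)=1$, one concludes that $\tilde f^*(\sigma,x)=g(\sigma)$ for $\mathbb{P}$-a.e.\ $\sigma$ and every $x\in X$.

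Since $\tilde f^*\circ F=\tilde f^*$, the function $g$ satisfies $g(\beta\sigma)=g(\sigma)$ for $\mathbb{P}$-a.e.\ $\sigma$; the Bernoulli shift $(\Omega,\mathbb{P},\beta)$ is ergodic, so $g$ is $\mathbb{P}$-almost surely a constant $c$, and
$$c=\int_{\Omega}g\,d\mathbb{P}=\int_{\Omega\times X}\tilde f\,d(\mathbb{P}\times\mu_p)=\int_{X}f\,d\mu_p.$$
Finally, I extend from $\mu_p$-a.e.\ $x$ to every $x\in X$: for any $\sigma$ in the full-$\mathbb{P}$-measure set where the limit equals $c$ for some point, the contraction of diameters along $\sigma\in S$ together with uniform continuity of $f$ forces the limit to be $c$ for every initial $x\in X$.

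The routine pieces are the invariance of $\mathbb{P}\times\mu_p$ and the appeal to Birkhoff; the main obstacle is the coupling argument that transfers the almost-sure statement from $\mu_p$-a.e.\ starting point to every $x\in X$, which is exactly where the hypothesis of $\mathbb{P}$-weak hyperbolicity (rather than full weak hyperbolicity, where the convergence $f_n\to 0$ is uniform) must be used carefully together with the compactness of $X$ to control the uniform continuity modulus of $f$ along the random orbit.
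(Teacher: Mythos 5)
There is a genuine gap at the heart of your argument, namely in the step that removes the dependence on the starting point. You claim that for $\sigma\in S$ one has $f_j(\sigma)=Diam(\omega_{\sigma_j}\circ\cdots\circ\omega_{\sigma_1}(X))\to 0$, and you use this together with uniform continuity of $f$ to conclude that the Ces\`aro averages for two starting points $x,y$ differ by a sequence tending to $0$. But $S$ is defined by the decay of $h_n(\sigma)=Diam(\omega_{\sigma_1}\circ\cdots\circ\omega_{\sigma_n}(X))$, where the newly chosen map is composed on the \emph{inside}; the quantity controlling the orbit $\omega_{\sigma_j}\circ\cdots\circ\omega_{\sigma_1}(x)$ is $f_j(\sigma)$, where it is composed on the \emph{outside}. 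These are genuinely different: the set $G=\{\sigma: f_n(\sigma)\to 0\}$ can satisfy $\mathbb{P}(G)=0$ while $\mathbb{P}(S)=1$. The paper's last section constructs exactly such an example ($\omega_0(x)=x/2$, $\omega_1$ the capped doubling map on $[0,1]$, with $p(\{0\})=p(\{1\})=1/2$), so for $\mathbb{P}$-almost every $\sigma$ the sequence $f_j(\sigma)$ does \emph{not} tend to zero and your oscillation bound fails pointwise in $j$.

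The paper bridges this gap with the set $F=\{\sigma:\frac1n\sum_{i=1}^n f_i(\sigma)\to 0\}$ and its key lemma asserting $\mathbb{P}(F)=1$ if and only if $\mathbb{P}(S)=1$. Proving this is not routine: it requires showing $\int_\Omega f_n\,d\mathbb{P}=\int_\Omega h_n\,d\mathbb{P}$ (exploiting the exchangeability of the product measure via a cylinder-set approximation) and then invoking Kingman's subadditive ergodic theorem for the subadditive sums $u_n=\sum_{j=1}^n f_j$ to identify the almost-sure Ces\`aro limit with $0$. Once $\mathbb{P}(F)=1$ is available, only Ces\`aro (not pointwise) decay of the diameters is known, so the comparison of Birkhoff averages at $(\sigma,x)$ and $(\sigma,y)$ is carried out first for Lipschitz test functions, where the difference of averages is bounded by $\frac{C}{n}\sum_{j=1}^{n-1}f_j(\sigma)+\frac{C}{n}Diam(X)$, and then extended to continuous functions by sup-norm approximation. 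Your skeleton (invariance of $\mathbb{P}\times\mu_p$, Birkhoff, shift-invariance of the limit function, ergodicity of the Bernoulli shift) matches the paper's, but without the $S$-versus-$F$ lemma the decisive step is unsupported, and it is precisely where the hypothesis of $\mathbb{P}$-weak hyperbolicity has to be converted into usable information about the forward compositions.
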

\section{Proof of Theorem 1}
Let $\varphi$ be a continuous function on $X$ and $\nu$ a Borel probability measure on $X$. Given $\epsilon > 0$, by uniform continuity there exists $\delta > 0$ such that if $d(x,y) < \delta$ then $|\varphi(x)-\varphi(y)| < \epsilon/2$. Since $\mathbb{P}(S) = 1$, by Egorov's theorem there exists a Borel set $B \subset \Omega$ with  $\mathbb{P}(B) > 1-\epsilon/4M$ such that $h_n$ converges to $0$ uniformly on $B$, where $M = \sup |\varphi(x)| $.

By definition of $B$ there exists $n_0 \in \mathbb{N}$ such that if $n \geq n_0$ then $h_{n}(\sigma) < \delta$ for any $\sigma \in B$. Take $\sigma \in B$, for any $x \in X$ and $m,n \geq n_0$ we have
$$      d(      \omega_{\sigma_1} \circ \cdot \cdot \cdot \circ \omega_{\sigma_n}(x)        ,  \omega_{\sigma_1} \circ \cdot \cdot \cdot \circ \omega_{\sigma_m}(x)    ) < \delta.                                     $$
Hence, $|\varphi(\omega_{\sigma_1} \circ \cdot \cdot \cdot \circ \omega_{\sigma_n}(x)  ) - \varphi( \omega_{\sigma_1} \circ \cdot \cdot \cdot \circ \omega_{\sigma_m}(x)   )| < \epsilon/2$. On the other hand, from definition of the transfer operator $T_p$ follows that
$$         \displaystyle\int_{X} \varphi d(T_p^n(\nu)) =     \displaystyle\int_{\Lambda^n}    \displaystyle\int_{X}  \varphi (\omega_{\sigma_1} \circ \cdot \cdot \cdot \circ \omega_{\sigma_n}(x)) d\nu dp^n.$$
Observe that 
we can split
any integration in $\Omega$ as an integration in $\Lambda^n \times \Omega$. Using this, we get
$$  \displaystyle\int_{\Lambda^n}    \displaystyle\int_{X}  \varphi (\omega_{\sigma_1} \circ \cdot \cdot \cdot \circ \omega_{\sigma_n}(x)) d\nu dp^n =  \displaystyle\int_{\Omega}    \displaystyle\int_{X}  \varphi (\omega_{\sigma_1} \circ \cdot \cdot \cdot \circ \omega_{\sigma_n}(x)) d\nu d\mathbb{P}.$$
On the other hand, for $m,n \geq n_0$ we have

$$	\Big|       \displaystyle\int_{B}\int_{X} (\varphi( \omega_{\sigma_1} \circ \cdot \cdot \cdot \circ \omega_{\sigma_n}(x)  )  - \varphi( \omega_{\sigma_1} \circ \cdot \cdot \cdot \circ \omega_{\sigma_m}(x)  ))  d\nu d\mathbb{P}                              \Big| \leq \frac{\epsilon}{2} \cdot \nu(B) $$
and
$$   \Big|       \displaystyle\int_{B^c}\int_{X} (\varphi( \omega_{\sigma_1} \circ \cdot \cdot \cdot \circ \omega_{\sigma_n}(x)  ) - \varphi( \omega_{\sigma_1} \circ \cdot \cdot \cdot \circ \omega_{\sigma_m}(x)  ))  d\nu d\mathbb{P}                        \Big| \leq  2\sup |\varphi(x)| \cdot \nu(B^c ).$$
Hence, for $m,n \geq n_0$ we get
$$ \Big| \displaystyle\int_{X} \varphi d(T_p^n(\nu))  -   \displaystyle\int_{X} \varphi d(T_p^m(\nu))                      \Big|  < \epsilon.$$
Therefore, 
$ \displaystyle\lim_{n\to+\infty} \displaystyle\int_{X} \varphi d(T_p^n(\nu)) $ exists for every continuous function $\varphi$. By Riesz-Markov representation theorem there exists a unique Borel measure $\mu$ such that 
$$ \displaystyle\lim_{n\to+\infty} \displaystyle\int_{X} \varphi d(T_p^n(\nu)) = \displaystyle\int_X \varphi d\mu,$$ 
for any continuous function $\varphi$ thus $ \displaystyle\lim_{n\to+\infty} T_{p}^n (\nu) = \mu$ in the wea$k^{*}$ topology. From Lemma 3.1 of \cite{ASJ} it follows that the Markov operator $T_p$ is continuous in the wea$k^{*}$ topology so $\mu$ is a fixed point of $T_p$.

We prove that for every $\nu \in \mathcal{M}_1(X)$ the sequence $\{T_{p}^n (\nu)\}_{n=1}^{\infty}$ converges in the wea$k^{*}$ topology to a fixed point of the operator $T_p$. To prove that the operator $T_p$ is asymptotic stable it suffices to prove that $T_p$ has a unique fixed point.

Now, we will prove a result that relates the set $S$ with the following set
$$   F = \{ \sigma \in \Omega:  \displaystyle\lim_{n\to+\infty}  \displaystyle\frac{1}{n} \displaystyle\sum_{i=1}^n Diam(\omega_{\sigma_i} \circ \cdot \cdot \cdot \circ \omega_{\sigma_1}(X))  = 0    \}                                .$$

 By Lemma 2.1 of \cite{ASJ} the functions $\psi_n:\Lambda^n \rightarrow \mathbb{R}$ given by 
 $$\psi_n(\sigma_1,...,\sigma_n) =   Diam(\omega_{\sigma_1} \circ \cdot \cdot \cdot \circ \omega_{\sigma_n}(X))$$
  are continuous and so the functions $f_n$ and $h_n$ are also continuous. Observe that
\[
\begin{split}
	h_{n+1}(\sigma)  &=     Diam(\omega_{\sigma_1} \circ \cdot \cdot \cdot \circ \omega_{\sigma_{n+1}}(X))  \\    
	& =      Diam(\omega_{\sigma_1} \circ \cdot \cdot \cdot \circ \omega_{\sigma_{n}}(\omega_{\sigma_{n+1}}(X))    \\
	&\leq h_n(\sigma).            \\    
\end{split}
\]
Thus $h(\sigma) = \displaystyle\lim_{n\to+\infty} h_n(\sigma)$ exists for every $\sigma \in \Omega$, clearly $h$ is measurable. We also have that
\[
\begin{split}
	f_{n+1}(\sigma) &= Diam(\omega_{\sigma_{n+1}} \circ \cdot \cdot \cdot \circ \omega_{\sigma_1}(X))\\
	&=Diam(\omega_{\sigma_{n+1}} \circ \cdot \cdot \cdot \circ \omega_{\sigma_2}(\omega_{\sigma_1}(X)))\\
	&\leq f_n(\beta(\sigma)).
\end{split}
\]
Hence, for $m,k\in \mathbb{N}$ we have that $f_{m+k}(\sigma) \leq f_{m}(\beta^k(\sigma))$. Define $u_n = \displaystyle\sum_{j=1}^n f_j$, observe that
$$
u_{m+n} =\displaystyle\sum_{j=1}^{m+n} f_j
= u_m + \displaystyle\sum_{j=m+1}^{m+n} f_j
\leq u_m + \displaystyle\sum_{i=1}^{n} f_i \circ \beta^m= u_m + u_n \circ \beta^m.
$$
Since the sequence $(u_n)_n$ is subadditive by Kingman's subadditive ergodic theorem the sequence $(u_n/n)_n$ converges $\mathbb{P}$-almost everywhere for an invariant function $f$ such that
$$ \displaystyle\int_{\Omega} f d\mathbb{P} = \displaystyle\lim_{n\to+\infty}  \displaystyle\frac{1}{n} \displaystyle\int_{\Omega} u_n d\mathbb{P}= \displaystyle\inf_{n}   \displaystyle\frac{1}{n} \displaystyle\int_{\Omega} u_n d\mathbb{P}                                  . $$

\begin{lemma}\label{lemma1}
	Fix $p \in \mathcal{M}_1(\Lambda)$. Then for every n, 
	$$\displaystyle\int_{\Omega} f_n d\mathbb{P} = \displaystyle\int_{\Omega} h_n d\mathbb{P}.$$
\end{lemma}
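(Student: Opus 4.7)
The plan is to observe that both $f_n$ and $h_n$ depend only on the first $n$ coordinates of $\sigma$, so each integral reduces via Fubini to an integral on $\Lambda^n$ against the product measure $p^n$, and then to exploit the permutation invariance of $p^n$.

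More precisely, I would first recall from the excerpt that $\psi_n : \Lambda^n \to \mathbb{R}$ defined by $\psi_n(\sigma_1,\ldots,\sigma_n) = \mathrm{Diam}(\omega_{\sigma_1} \circ \cdots \circ \omega_{\sigma_n}(X))$ is continuous (hence Borel measurable), and note that
\[
h_n(\sigma) = \psi_n(\sigma_1,\ldots,\sigma_n), \qquad f_n(\sigma) = \psi_n(\sigma_n,\sigma_{n-1},\ldots,\sigma_1).
\]
Since $\mathbb{P} = p^{\mathbb{N}}$ is the product measure on $\Omega$, Fubini's theorem yields
\[
\int_\Omega h_n \, d\mathbb{P} = \int_{\Lambda^n} \psi_n(\sigma_1,\ldots,\sigma_n) \, dp^n(\sigma_1,\ldots,\sigma_n)
\]
and similarly
\[
\int_\Omega f_n \, d\mathbb{P} = \int_{\Lambda^n} \psi_n(\sigma_n,\sigma_{n-1},\ldots,\sigma_1) \, dp^n(\sigma_1,\ldots,\sigma_n).
\]

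To finish, I would apply the change of variables $\tau_i := \sigma_{n+1-i}$ in the second integral. This corresponds to the permutation of coordinates $\Phi : \Lambda^n \to \Lambda^n$, $\Phi(\sigma_1,\ldots,\sigma_n) = (\sigma_n,\ldots,\sigma_1)$, and product measures on $\Lambda^n$ are invariant under any permutation of coordinates, i.e. $\Phi_* p^n = p^n$. Therefore
\[
\int_{\Lambda^n} \psi_n(\sigma_n,\ldots,\sigma_1) \, dp^n = \int_{\Lambda^n} \psi_n \circ \Phi \, dp^n = \int_{\Lambda^n} \psi_n \, dp^n,
\]
which equals $\int_\Omega h_n \, d\mathbb{P}$. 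The argument is short and there is no serious obstacle; the only point that requires care is recording that $f_n$ and $h_n$ really are the same function $\psi_n$ composed with coordinate permutation, so that the invariance of $p^n$ under permutations delivers the identity.
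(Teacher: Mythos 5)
Your proof is correct, and it is genuinely more direct than the one in the paper. The paper establishes the identity by an $\epsilon$-approximation: it partitions $\Lambda$ into sets $P_1,\dots,P_k$ of small diameter, uses the uniform continuity of $f_n$ and $h_n$ (which follows from continuity of $\psi_n$ and compactness) to approximate both integrals by finite sums over cylinders $[1;P_{i_1},\dots,P_{i_m}]$, and then observes that the cylinder weight $p(P_{i_1})\cdots p(P_{i_m})$ is unchanged when the first $n$ indices are reversed; letting $\epsilon \to 0$ gives equality. The underlying mechanism in both arguments is the same --- the symmetry of the Bernoulli measure under permutations of finitely many coordinates --- but you invoke it directly: since $h_n = \psi_n \circ \pi_n$ and $f_n = \psi_n \circ \Phi \circ \pi_n$ where $\pi_n$ is the projection onto the first $n$ coordinates (whose pushforward of $\mathbb{P}$ is $p^n$) and $\Phi$ is the order-reversing permutation (which preserves $p^n$), the change-of-variables formula gives the identity exactly, with no discretization. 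Your version is shorter, avoids the uniform-continuity machinery entirely (Borel measurability of $\psi_n$ suffices), and would work even if $\Lambda$ were not compact, so it is arguably the better proof; the only point to record carefully, as you do, is that $f_n$ and $h_n$ really are the same kernel $\psi_n$ read in opposite coordinate orders.
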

\begin{proof}
	Given $\epsilon > 0$, by uniform continuity there exists $\delta > 0$ such that if $D(\sigma,\xi) < \delta$ then $|f_n(\sigma) - f_n(\xi)| < \epsilon$ and $|h_n(\sigma) - h_n(\xi)| < \epsilon$. Consider a partition $\mathcal{P} = \{ P_1,...,P_k\}$ of $\Lambda$ with $Diam(P_i) < \delta/2$ and fix $\lambda_i \in P_i$	for each $i=1,...,k$.

	Fix $m > n$ such that $Diam(X)/m < \delta/2$ and $\sigma^{i_1,...,i_m} \in \Omega$ such that \linebreak $\sigma_{j}^{i_1,...,i_m} = \lambda_{i_j}$ for $j=1,...,m$ and $i_1,...,i_m =1,...,k$. Observe that $$  Diam( [1;P_{i_1},...P_{i_m}]     ) < \delta.$$
	Hence,
\[
	\begin{split}
		\displaystyle\int_{\Omega} f_n d\mathbb{P} &= \displaystyle\sum_{i_1,...,i_m = 1}^k \displaystyle\int_{  [1;P_{i_1},...P_{i_m}]      } f_n(\sigma) d\mathbb{P} \\
		& < \displaystyle\sum_{i_1,...,i_m = 1}^k  \displaystyle\int_{  [1;P_{i_1},...P_{i_m}]      } (\epsilon + f_n (\sigma^{i_1,...,i_m}  )) d\mathbb{P} \\
		&= \epsilon + \displaystyle\sum_{i_1,...,i_m = 1}^k  p(P_{i_1}) \cdot \cdot \cdot p(P_{i_m})  Diam(\omega_{ \lambda_{i_n}} \circ \cdot \cdot \cdot \circ \omega_{ \lambda_{i_1}}(X)) \\
		&=  \epsilon + \displaystyle\sum_{i_1,...,i_m = 1}^k  p(P_{i_n}) \cdot \cdot \cdot p(P_{i_1}) \cdot p(P_{i_{n+1}})\cdot \cdot  \cdot  p(P_{i_m})  h_n( \sigma^{i_n,...,i_1,i_{n+1},...,i_m}) \\		
		&= \epsilon + \displaystyle\sum_{i_1,...,i_m = 1}^k \displaystyle\int_{[1;P_{i_n},...,P_{i_1},P_{i_{n+1}},...,P_{i_m}]      }  h_n( \sigma^{i_n,...,i_1,i_{n+1},...,i_m}) d\mathbb{P}\\
		&< 2\epsilon + \displaystyle\sum_{i_1,...,i_m = 1}^k \displaystyle\int_{  [1;P_{i_n},...,P_{i_1},P_{i_{n+1}},...,P_{i_m}]      } h_n(\sigma) d\mathbb{P}\\
		&= 2\epsilon +  \displaystyle\int_{\Omega} h_n d\mathbb{P}.\\ 
	\end{split}
\]
	We also have 
	$$    \displaystyle\int_{\Omega} h_n d\mathbb{P} < 2\epsilon + \displaystyle\int_{\Omega} f_n d\mathbb{P}.                             $$
	Therefore, 
	$$\Big| \displaystyle\int_{\Omega} f_n d\mathbb{P} - \displaystyle\int_{\Omega} h_n d\mathbb{P}\Big| < 2\epsilon.$$
	For every $\epsilon > 0$, thus $\displaystyle\int_{\Omega} f_n d\mathbb{P} = \displaystyle\int_{\Omega} h_n d\mathbb{P} $.
\end{proof}

The following result is the key lemma of this section. We shall prove that $\mathbb{P}(F) = 1$ if and only if $\mathbb{P}(S) = 1$. In particular, if the IFS $\omega$ is $\mathbb{P}$-weakly hyperbolic then for almost every $\sigma \in \Omega$ we have
$$\displaystyle\lim_{n\to+\infty}  \displaystyle\frac{1}{n} \displaystyle\sum_{i=1}^n Diam(\omega_{\sigma_i} \circ \cdot \cdot \cdot \circ \omega_{\sigma_1}(X))  = 0.$$


\begin{lemma}\label{lemakey}
	Fix $p \in \mathcal{M}_1(\Lambda)$. Then $\mathbb{P}(F) = 1$ if and only if $\mathbb{P}(S) = 1$.
\end{lemma}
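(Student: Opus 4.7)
The plan is to show both directions by computing the integral of Kingman's limit $f^\ast$ and relating it to $\int h\,d\mathbb{P}$, where $h(\sigma)=\lim_n h_n(\sigma)$ (which exists everywhere by monotonicity of $h_n$, as noted just before the lemma). The point is that both $\mathbb{P}(S)=1$ and $\mathbb{P}(F)=1$ translate into the vanishing of an integrable non-negative function, and we can identify the two integrals in question using Lemma \ref{lemma1}.

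First I would observe the following chain. Since $0\le h_{n+1}\le h_n\le \operatorname{Diam}(X)$, monotone (or dominated) convergence gives $\int h_n\,d\mathbb{P}\to\int h\,d\mathbb{P}$. By Lemma \ref{lemma1}, $\int f_n\,d\mathbb{P}=\int h_n\,d\mathbb{P}$, so $\int f_n\,d\mathbb{P}\to\int h\,d\mathbb{P}$ as well. Now apply Kingman's theorem to the subadditive sequence $u_n=\sum_{j=1}^n f_j$: we obtained that $u_n/n\to f^\ast$ $\mathbb{P}$-a.e.\ and
\[
\int_\Omega f^\ast\,d\mathbb{P}=\lim_{n\to\infty}\frac1n\int_\Omega u_n\,d\mathbb{P}=\lim_{n\to\infty}\frac1n\sum_{j=1}^{n}\int_\Omega f_j\,d\mathbb{P}.
\]
Since the averaged sequence $\int f_j\,d\mathbb{P}$ converges to $\int h\,d\mathbb{P}$, the Cesàro averages have the same limit, yielding the key identity
\[
\int_\Omega f^\ast\,d\mathbb{P}=\int_\Omega h\,d\mathbb{P}.
\]

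With this identity, both implications are immediate using that $h\ge 0$ and $f^\ast\ge 0$. If $\mathbb{P}(S)=1$, then $h=0$ $\mathbb{P}$-a.e., so $\int f^\ast\,d\mathbb{P}=0$, hence $f^\ast=0$ $\mathbb{P}$-a.e.; since $u_n/n\to f^\ast$ a.e., this is exactly $\mathbb{P}(F)=1$. Conversely, if $\mathbb{P}(F)=1$, then $u_n/n\to 0$ a.e., forcing $f^\ast=0$ a.e.\ by uniqueness of the Kingman limit, hence $\int h\,d\mathbb{P}=0$ and so $h=0$ $\mathbb{P}$-a.e., i.e.\ $\mathbb{P}(S)=1$.

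The only nontrivial ingredient is the identity $\int f^\ast\,d\mathbb{P}=\int h\,d\mathbb{P}$, whose whole strength comes from Lemma \ref{lemma1}; everything else is a bookkeeping use of monotone convergence, Cesàro summation and the non-negativity of $h$ and $f^\ast$. I do not expect any obstacle beyond making sure that the Cesàro step is justified (which it is, since $\int f_j\,d\mathbb{P}=\int h_j\,d\mathbb{P}$ is already monotone decreasing, so the Cesàro averages trivially converge to the same limit).
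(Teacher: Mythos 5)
Your proof is correct and follows essentially the same route as the paper: Lemma \ref{lemma1} transfers between $\int f_n\,d\mathbb{P}$ and $\int h_n\,d\mathbb{P}$, Kingman's theorem identifies $\lim_n \frac1n\int u_n\,d\mathbb{P}$ with $\int f^\ast\,d\mathbb{P}$, and dominated convergence plus non-negativity closes both directions. The only difference is cosmetic: where the paper invokes Fatou's lemma for the implication $\mathbb{P}(F)=1 \Rightarrow \mathbb{P}(S)=1$, you exploit the monotonicity of $h_n$ to obtain the exact identity $\int f^\ast\,d\mathbb{P}=\int h\,d\mathbb{P}$, which then settles both implications symmetrically.
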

\begin{proof}
	Assume that $\mathbb{P}(F) = 1$, by Fatou's lemma we have
	\begin{eqnarray*}
		\displaystyle\int_{\Omega} h d\mathbb{P} &=& \displaystyle\int_{\Omega} \liminf_{n\to+\infty} h_n d\mathbb{P}\\
		&\leq& \liminf_{n\to+\infty} \displaystyle\int_{\Omega} h_n d\mathbb{P}\\
		& \leq& \liminf_{n\to+\infty} \displaystyle\frac{1}{n} \displaystyle\sum_{i=1}^n \displaystyle\int_{\Omega} h_i d\mathbb{P}.\\
	\end{eqnarray*}      
	From Lemma \ref{lemma1} it follows that
	\begin{eqnarray*}  
		\displaystyle\int_{\Omega} h d\mathbb{P} &\leq&           \liminf_{n\to+\infty} \displaystyle\frac{1}{n} \displaystyle\sum_{i=1}^n \displaystyle\int_{\Omega} f_i d\mathbb{P} \\  
		&=&  \liminf_{n\to+\infty} \displaystyle\frac{1}{n} \displaystyle\int_{\Omega} u_n d\mathbb{P}.
		 \\                   
	\end{eqnarray*}  
	On the other hand, $u_n(\sigma)/n$ converges $\mathbb{P}$-almost everywhere for $0$ . From dominated convergence theorem it follows that 
	$$ \displaystyle\int_{\Omega} h d\mathbb{P} = 0.$$
Since $h \geq 0$ almost everywhere it follows that $h = 0$ almost everywhere. Therefore, $ \mathbb{P}(S) = 1$.

	 Now assume that  $\mathbb{P}(S) = 1$, by definition $h_n$ converges $\mathbb{P}$-almost everywhere for $0$. From dominated convergence theorem it follows that
	$$      \displaystyle\lim_{n\to+\infty}    \displaystyle\int_{\Omega} h_n d\mathbb{P}      = 0.                 $$ 
	On the other hand,
	\begin{eqnarray*}
		\displaystyle\int_{\Omega} f d\mathbb{P} &=& \displaystyle\lim_{n\to+\infty}  \displaystyle\frac{1}{n} \displaystyle\int_{\Omega} u_n d\mathbb{P}\\
		&=& \displaystyle\lim_{n\to+\infty}\displaystyle\frac{1}{n} \displaystyle\sum_{j=1}^n \displaystyle\int_{\Omega} f_j d\mathbb{P}.\\
	\end{eqnarray*}
By Lemma \ref{lemma1} we get

	$$      \displaystyle\int_{\Omega} f d\mathbb{P} =  \displaystyle\lim_{n\to+\infty}\displaystyle\frac{1}{n} \displaystyle\sum_{j=1}^n \displaystyle\int_{\Omega} h_j d\mathbb{P}  = 0 .                                   $$
	Since $f \geq 0$ almost everywhere it follows that $f = 0$ almost everywhere. Therefore, $ \mathbb{P}(F) = 1$.           
\end{proof}

Another important set is defined by
$$      G = \{ \sigma \in \Omega:  \displaystyle\lim_{n\to+\infty}  Diam(\omega_{\sigma_n} \circ \cdot \cdot \cdot \circ \omega_{\sigma_1}(X))  = 0   \}.                                                             $$
It is obvious that $G \subset F$. If the IFS $\omega$ is weakly hyperbolic  by Lemma 2.2 of \cite{ASJ} it follows that $S=G=F = \Omega$. We will see in the last section an example of a $\mathbb{P}$-weakly hyperbolic IFS such that $\mathbb{P}(F) =  1$ but $\mathbb{P}(G) = 0$.

\begin{remark}
Let $\omega$ be an IFS on a compact interval with finite parameter space such that for every $i \in \Lambda$ the maps $\omega_i$ are continuous and one-to-one. Malicet in \cite{M} proved that if the maps $\omega_i$ do not have a fixed point common then $\mathbb{P}(G) = 1$ for any Bernoulli measure. In particular, from Lemma \ref{lemakey}  it follows that this IFS is $\mathbb{P}$-weakly hyperbolic.
\end{remark}

Given an IFS $\omega:\Lambda \times X \rightarrow X$ is natural to consider the skew product map $\Phi:\Omega \times X \rightarrow \Omega \times X$ defined by
$  \Phi(\sigma,x) = (\beta(\sigma),\omega_{\sigma_1}(x)).$  
 
 Let $\mu_1$ be a fixed point of the operator $T_p$. By Lemma 4.1 of \cite{ASJ} we have that the measure $\mathbb{P} \times \mu_1 $ is invariant by $\Phi$. Thus, to prove that the operator $T_p$ has a unique fixed point it suffices prove that the skew product $\Phi$ has a unique invariant measure $\xi$ such that $\pi_{*}(\xi) = \mathbb{P}$, where $\pi: \Omega \times X \rightarrow X$ is the projection defined by $\pi(\sigma,x) = \sigma$.
 
 \begin{proposition}\label{pro1}
 	Let $\mu_1$ and $\mu_2$ be invariant measures for $\Phi$. If $\nu(F) = 1$ and
 	$ \pi_{*}(\mu_1) = \pi_{*}(\mu_2)= \nu$ then $\mu_1 = \mu_2$.       
 \end{proposition}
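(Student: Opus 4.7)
The plan is to show that $\int \psi\,d\mu_1 = \int \psi\,d\mu_2$ for every continuous $\psi:\Omega \times X \to \mathbb{R}$, which forces $\mu_1 = \mu_2$ by Riesz--Markov. First, since each $\mu_i$ is $\Phi$-invariant and projects to $\nu$ under $\pi$, the measure $\nu$ must be $\beta$-invariant. I would then disintegrate $\mu_i = \int_\Omega \delta_\sigma \otimes \mu_{i,\sigma}\,d\nu(\sigma)$, which is valid in our compact metric setting. Fix an arbitrary reference point $x_0 \in X$ and define the continuous function
$$J_N(\sigma) = \frac{1}{N} \sum_{n=0}^{N-1} \psi\bigl(\beta^n \sigma, \omega_{\sigma_n} \circ \cdots \circ \omega_{\sigma_1}(x_0)\bigr),$$
which depends only on $\sigma$ and $x_0$, not on $\mu_i$.

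Using $\Phi$-invariance of $\mu_i$, averaging $\int \psi\,d\mu_i = \frac{1}{N}\sum_{n} \int \psi \circ \Phi^n \, d\mu_i$, and applying the disintegration, I would obtain
$$\int \psi\,d\mu_i = \int_\Omega \int_X \Bigl[\frac{1}{N} \sum_{n=0}^{N-1} \psi\bigl(\beta^n \sigma, \omega_{\sigma_n} \circ \cdots \circ \omega_{\sigma_1}(x)\bigr)\Bigr]\,d\mu_{i,\sigma}(x)\,d\nu(\sigma).$$
The bracketed quantity differs from $J_N(\sigma)$ by at most $\frac{1}{N} \sum_{n=0}^{N-1} \omega_\psi\bigl(Diam(\omega_{\sigma_n} \circ \cdots \circ \omega_{\sigma_1}(X))\bigr)$, where $\omega_\psi$ is the modulus of continuity of $\psi$ in the $X$-variable. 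For $\sigma \in F$ the Cesaro average of the diameters tends to $0$ by definition of $F$, and a Chebyshev-type split of the sum---separating indices by whether the diameter exceeds a threshold $\delta$ and then letting $\delta \to 0$---upgrades this to Cesaro convergence of $\omega_\psi$ applied to the diameters. Since $\nu(F) = 1$ and the integrand is uniformly bounded, dominated convergence yields $\int \psi\,d\mu_i - \int J_N\,d\nu \to 0$ as $N \to \infty$.

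Because $J_N$ is defined without any reference to $\mu_i$, this shows $\int \psi\,d\mu_1 = \lim_N \int J_N\,d\nu = \int \psi\,d\mu_2$ for every continuous $\psi$, and the proposition follows. The main technical obstacle is the Chebyshev step that converts Cesaro convergence of the diameters into Cesaro convergence of $\omega_\psi$ applied to them; this is exactly where the hypothesis $\nu(F) = 1$ (rather than the stronger $\nu(S) = 1$) is exploited. All remaining ingredients---the existence of the disintegration, $\Phi$-invariance, and dominated convergence---are routine once this step is handled.
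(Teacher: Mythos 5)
Your proof is correct, but it takes a genuinely different route from the paper's. The paper fixes a Lipschitz function $\varphi$, applies Birkhoff's ergodic theorem to $\mu_1$ to get a full-measure set where the time averages $\varphi^{*}(\sigma,x)$ exist, and then uses $\sigma\in F$ to show that $\varphi^{*}(\sigma,\cdot)$ is constant in the fibre; this produces a function $g$ on $\Omega$ depending only on $\nu$, and a second application of Birkhoff to $\mu_2$ plus density of Lipschitz functions closes the argument. You instead avoid Birkhoff entirely: you exploit exact $\Phi$-invariance to replace $\int\psi\,d\mu_i$ by its Ces\`aro averages, disintegrate over $\nu$, and compare the fibrewise orbit with the orbit of a fixed reference point $x_0$, so that the $\mu_i$-dependence is absorbed into an error term controlled by $\frac{1}{N}\sum_n \omega_\psi\bigl(Diam(\omega_{\sigma_n}\circ\cdots\circ\omega_{\sigma_1}(X))\bigr)$. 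The price you pay for working with general continuous $\psi$ rather than Lipschitz ones is the Chebyshev step upgrading Ces\`aro convergence of the diameters to Ces\`aro convergence of $\omega_\psi$ of the diameters; that step is sound (split indices according to whether the diameter exceeds $\delta$, bound the exceptional fraction by $\frac{1}{N\delta}\sum_n Diam(\cdots)$, and let $\delta\to 0$ using uniform continuity of $\psi$), and it plays exactly the role that Lipschitz density plays in the paper. Both arguments use $\nu(F)=1$ in the same essential way. Your version is arguably more self-contained (no ergodic theorem, no density argument), at the cost of invoking disintegration --- which you could even dispense with by integrating the comparison directly against $\mu_i$ on $\Omega\times X$ and using $\pi_{*}\mu_i=\nu$ to identify $\int J_N\,d\nu$. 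One small point worth making explicit: the limit $\lim_N\int J_N\,d\nu$ exists precisely because $\int\psi\,d\mu_i-\int J_N\,d\nu\to 0$ and $\int\psi\,d\mu_i$ does not depend on $N$.
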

 \begin{proof}
 	
 	Let $r$ be the distance in $\Omega \times X$, where $r((\sigma,x), (\xi,y)) = D(\sigma,\xi) + d(x,y)$. Consider a Lipschitz function $\varphi:\Omega \times X \rightarrow \mathbb{R}$ with Lipschitz constant $C$. By Birkhoff's ergodic theorem  there exists a Borel set $A$ with $   \mu_1(A) = 1$ such that if $(\sigma,x) \in A$ then
 	$$
 		\varphi^{*}(\sigma,x)= \displaystyle\lim_{n\to+\infty}  \displaystyle\frac{1}{n}\displaystyle\sum_{j=0}^{n-1} \varphi(\Phi^{j}(\sigma, x))
 	$$
 	exists.
 	
 	Consider the set $B = A \cap (F \times X)$. Since $\mu_1(F \times X) = \nu(F) = 1$ it follows that $\mu_1(B) = 1$. We claim that if $(\sigma,x) \in B$ then $\varphi^{*}(\sigma, y)$ exists, for all $y \in X$ and $ \varphi^{*}(\sigma, y) = \varphi^{*}(\sigma, x)         $. In fact, observe that
 	\begin{eqnarray*}
 		\Big|   \displaystyle\frac{1}{n}\displaystyle\sum_{j=0}^{n-1} \varphi(\Phi^{j}(\sigma, x)) - \displaystyle\frac{1}{n}\displaystyle\sum_{j=0}^{n-1} \varphi(\Phi^{j}(\sigma, y))                       \Big| & \leq& \displaystyle\frac{C}{n} \displaystyle\sum_{j=0}^{n-1} r(    \Phi^{j}(\sigma, x)        ,   \Phi^{j}(\sigma, y)       )\\
 		&\leq& \displaystyle\frac{C}{n} \displaystyle\sum_{j=1}^{n-1} Diam(\omega_{\sigma_{j}} \circ \cdot \cdot \cdot \circ \omega_{\sigma_1}(X))\\
 		&+& \displaystyle\frac{C}{n}Diam(X).\\
 	\end{eqnarray*}
 	Since $\sigma \in F$ by definition of $F$, 
 	$$       \displaystyle\lim_{n\to+\infty}    \displaystyle\frac{1}{n} \displaystyle\sum_{i=1}^n Diam(\omega_{\sigma_i} \circ \cdot \cdot \cdot \circ \omega_{\sigma_1}(X))  = 0.                             $$
 	Therefore, $\varphi^{*}(\sigma, y)$ exists and $ \varphi^{*}(\sigma, y) = \varphi^{*}(\sigma, x)         $.
 	
 	Consider the set
 	$$\Omega^{*} = \{\sigma \in F: {\rm there \ \ exists}\, \ \  x \in X  \ \ {\rm such \ \ that}\,  \ \ \varphi^{*}(\sigma, x) \ \ {\rm is}\, \ \ \rm{defined}\}.$$
 	
 	 We claim that $\nu(\Omega^{*}) =  \nu(F) =  1$. In fact, let us suppose that for some $D \subset F$, with $\nu(D) > 0$, if $\sigma \in D$ then  $\varphi^{*}(\sigma, x) $ do not exist for all $x \in X$. Observe that $\mu_1(D \times X) = \nu(D) > 0$, but this is an absurd because $\mu_1(B) = 1$.      
 	 
 	Consider a measurable function $g: \Omega \rightarrow \mathbb{R}$, where $g(\sigma) = \varphi^{*} (\sigma, x)$ for any $ x \in X$, if $\sigma \in \Omega^{*} $. Since $\mu_2( \Omega^{*} \times X ) = 1$ by  Birkhoff's ergodic theorem we have 	
 	\begin{eqnarray*} \displaystyle\int_{\Omega \times X} \varphi \,  d\mu_1 &=&   \displaystyle\int_{\Omega \times X} \varphi^{*} d\mu_1\\  
 		&=&      \displaystyle\int_{\Omega^{*} \times X} \varphi^{*} d\mu_1\\         
 		&=& \displaystyle\int_{\Omega} g \, d\nu\\
 		&=&  \displaystyle\int_{\Omega \times X} \varphi^{*} d\mu_2\\
 		&=&  \displaystyle\int_{\Omega \times X} \varphi \, d\mu_2.\\	                                                   
 	\end{eqnarray*}
 	Now, using that the subspace of the Lipschitz functions on $\Omega\times X$ is dense in the space of the continuous functions on $\Omega \times X$ it follows that 
 	$$         \displaystyle\int_{\Omega \times X} \psi \, d\mu_1 =      \displaystyle\int_{\Omega \times X} \psi \, d\mu_2,                           $$
 	for every continuous function $\psi$ on $\Omega \times X$ thus $\mu_1 = \mu_2$.
 \end{proof}
Since the IFS $\omega$ is $\mathbb{P}$-Weakly Hyperbolic, from Lemma \ref{lemakey} it follows that \linebreak $\mathbb{P}(F) = 1$. Thus by Proposition \ref{pro1} the skew product $\Phi$ has a unique invariant measure $\xi$ such that $\pi_{*}(\xi) = \mathbb{P}$. Therefore, the operator $T_p$ is asymptotic stable and has a unique fixed point $\mu_p$.

Now, we assume that $p(U) > 0$ for every open set $U \subset \Lambda$ to study the support of the measure $\mu_p$. Following \cite{ASJ} and \cite{MA} for each $\sigma \in \Omega$, $n \in \mathbb{N}$ and $x \in X$, define $\Gamma(\sigma,n,x) = \omega_{\sigma_1} \circ \cdot \cdot \cdot \circ \omega_{\sigma_n}(x)$. 	Observe that for each $n \geq 1$, we have
$$       \omega_{\sigma_1} \circ \cdot \cdot \cdot \circ \omega_{\sigma_{n+1}}(X) \subset \omega_{\sigma_1} \circ \cdot \cdot \cdot \circ \omega_{\sigma_n}(X).                        $$
If $\sigma \in S$ by definition we have that  $\displaystyle\lim_{n \rightarrow \infty} Diam(\omega_{\sigma_1} \circ \cdot \cdot \cdot \circ \omega_{\sigma_n}(X) ) = 0 $. Hence, the set
$$  \displaystyle\bigcap_{n=1}^{\infty}   \omega_{\sigma_1} \circ \cdot \cdot \cdot \circ \omega_{\sigma_n}(X)                                         $$
consists of a unique point $y$. On the other hand, 
$$d(y,\Gamma(\sigma,n,x ) ) \leq Diam(\omega_{\sigma_1} \circ \cdot \cdot \cdot \circ \omega_{\sigma_n}(X) ).$$
Therefore, $ \displaystyle\lim_{n\to+\infty}\Gamma(\sigma,n,x ) = y$ for any $x \in X$. This defines a function $\Gamma:S \rightarrow X$  given by 
$$     \Gamma(\sigma) =    \displaystyle\lim_{n\to+\infty}\Gamma(\sigma,n,x ),\ \ {\rm for}\, \, {\rm any}\, \,x \in X  .$$ 

By the same proof of Lemma 2.7 of \cite{ASJ} we have that the map $\Gamma$ is continuous.  We claim that ${\rm supp}\,(\mu) = \overline{\Gamma(S)}$. In fact, take $a = \Gamma(\sigma)$ with $ \sigma \in S $, observe that $\omega_{\lambda}(y) \in \Gamma(S)$ for any $\lambda \in \Lambda$ and $y\in \Gamma(S)$ thus for every $n$ we have
$$T_p^n(\delta_a)\big( \overline{\Gamma(S)} \big) = 1.$$

By the previous discussion we have that $T_p^n(\delta_a) \rightarrow \mu_p$ in the wea$k^{*}$ \linebreak topology. Therefore, $\mu_p(\overline{\Gamma(S)}) \geq \displaystyle\lim_{n\to+\infty} T_p^n(\delta_a)\big( \overline{\Gamma(S)} \big) = 1$, this proves that \linebreak $\rm{supp}(\mu_p) \subset \overline{\Gamma(S)}$.

Now take $p \in \overline{\Gamma(S)}$ and an open set $U \subset X$ with $p \in U$, by continuity of $\Gamma$ there exists $R > 0$, $m \in \mathbb{N}$ and open sets $U_1,...,U_m \subset \Lambda$ such that $\Gamma([1;U_1,...,U_m] \cap S) \subset \overline{B_R(p)} \subset U$. By definition of $T_p^n$ for $n > m$ we have
\begin{eqnarray*}
	T_p^n(\delta_a)(\overline{B_R(p)})
	&=& \displaystyle\int_{\Lambda^n} \delta_{\omega_{\lambda_1} \circ \cdot \cdot \cdot \circ \omega_{\lambda_n}(a)}(\overline{B_R(p)}) dp^n(\lambda)\\
	&=& \displaystyle\int_{\Omega} \delta_{\omega_{\lambda_1} \circ \cdot \cdot \cdot \circ  \omega_{\lambda_n}(a)}(\overline{B_R(p)}) d\mathbb{P}\\
\end{eqnarray*}
Observe that $\omega_{\lambda_1} \circ \cdot \cdot \cdot \circ \omega_{\lambda_n} (a) = \Gamma(\eta_{\lambda_1} \circ \cdot \cdot \cdot \circ \eta_{\lambda_n}(\sigma))$, where for each $\lambda \in \Lambda$ the map $\eta_{\lambda}: \Omega \rightarrow \Omega$ is defined by $\eta_{\lambda}(\xi_1,\xi_2,...) = (\eta,\xi_1,\xi_2,..)$. Hence, if $\lambda_i \in U_i$ for $i=1,...,n$ then $\omega_{\lambda_1} \circ \cdot \cdot \cdot \circ \omega_{\lambda_n} (a) = \Gamma(\eta_{\lambda_1} \circ \cdot \cdot \cdot \circ \eta_{\lambda_n}(\sigma)) \in \Gamma([1;U_1,...,U_m] \cap S)$. Since $\Gamma([1;U_1,...,U_m] \cap S) \subset \overline{B_R(p)} \subset U$ it follows that
\begin{eqnarray*}	
T_p^n(\delta_a)(\overline{B_R(p)})
&=& 	\displaystyle\int_{([1;U_1,...,U_m] \cap S)} \delta_{\omega_{\lambda_1} \circ \cdot \cdot \cdot \circ \omega_{\lambda_n}(a)}(\overline{B_R(p)}) d\mathbb{P}\\
	&\geq& \mathbb{P}([1;U_1,...,U_m]).\\
\end{eqnarray*}
Therefore, $\mu_p(U) \geq \mu_p(\overline{B_R(p)}) \geq \mathbb{P}([1;U_1,...,U_m]) >0$. This proves that \linebreak $\overline{\Gamma(S)} \subset  {\rm supp}\,(\mu)$ and so  $\overline{\Gamma(S)} = {\rm supp}\,(\mu)$. This concludes the proof of Theorem \ref{teoo}.
\begin{remark}
When the IFS $\omega$ is weakly hyperbolic the set $\Gamma(\Omega)$ coincides with the attractor of the IFS.
\end{remark}
\section{Proof of Theorem \ref{ergodic}}
Let $\nu$ be an invariant probability measure by $\beta$. Since $\Omega \times X$ is a compact metric space it follows that the space $\mathcal{M}_1(\Omega \times X)$ is a compact metric space. By continuity of $\Phi$ there exists at least one invariant measure $\nu^{\Phi}$ such that $\pi_{*}(  \nu^{\Phi} ) = \nu$. By Proposition \ref{pro1} if $\nu(F) = 1$ then  $\nu^{\Phi}$ is the unique invariant measure by $\Phi$ with this property. In the next result we relate the ergodicity of $\nu$ with the ergodicity of $\nu^{\Phi}$.

  \begin{proposition}\label{pro2}
	Let $\omega$ be  an IFS with $\nu(F) = 1$. If the measure $\nu$ is ergodic then the measure  $\nu^{\Phi}$ is ergodic.
\end{proposition}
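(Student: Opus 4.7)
The plan is to show that $\nu^\Phi$ is ergodic by verifying that every $\Phi$-invariant Borel set $D \subset \Omega \times X$ satisfies $\nu^\Phi(D) \in \{0,1\}$. The strategy is to first establish that Birkhoff averages of Lipschitz test functions are $\nu^\Phi$-a.e. constant, using the $y$-independence of such averages on the good set $F \times X$ that was already exploited in Proposition \ref{pro1}, and then reduce the ergodicity statement to this case by approximation.

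Fix a Lipschitz function $\varphi:\Omega \times X \to \mathbb{R}$ with constant $C$. By Birkhoff's ergodic theorem applied to $(\Phi, \nu^\Phi)$, the time averages converge $\nu^\Phi$-a.e. to a $\Phi$-invariant function $\varphi^{*}$. Since $\pi_{*}(\nu^\Phi) = \nu$ and $\nu(F) = 1$, the set $F \times X$ has full $\nu^\Phi$-measure, so I may restrict to $(\sigma,x)$ with $\sigma \in F$. Repeating verbatim the telescoping calculation of Proposition \ref{pro1},
\[
\Big|\tfrac{1}{n}\sum_{j=0}^{n-1}\varphi(\Phi^{j}(\sigma,x)) - \tfrac{1}{n}\sum_{j=0}^{n-1}\varphi(\Phi^{j}(\sigma,y))\Big| \leq \tfrac{C}{n}\sum_{j=1}^{n-1} Diam(\omega_{\sigma_{j}} \circ \cdots \circ \omega_{\sigma_{1}}(X)) + \tfrac{C}{n} Diam(X),
\]
which tends to $0$ by the definition of $F$. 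Hence $\varphi^{*}(\sigma,y)$ exists and equals $\varphi^{*}(\sigma,x)$ for every $y \in X$, so there is a measurable function $g:\Omega \to \mathbb{R}$, defined $\nu$-a.e., with $\varphi^{*} = g \circ \pi$ on a set of full $\nu^\Phi$-measure.

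Next, the semiconjugacy $\pi \circ \Phi = \beta \circ \pi$ together with the $\Phi$-invariance $\varphi^{*} \circ \Phi = \varphi^{*}$ forces $g \circ \beta = g$ $\nu$-a.e., and ergodicity of $\nu$ then makes $g$ $\nu$-a.e. constant, so $\varphi^{*}$ is $\nu^\Phi$-a.e. equal to $\int \varphi\, d\nu^\Phi$. To finish, let $D \subset \Omega \times X$ be any $\Phi$-invariant Borel set and approximate $\mathbf{1}_{D}$ in $L^{1}(\nu^\Phi)$ by Lipschitz functions $\varphi_{k}$. The mean ergodic theorem gives $\varphi_{k}^{*} \to (\mathbf{1}_{D})^{*}$ in $L^{1}(\nu^\Phi)$, and by the $\Phi$-invariance of $D$ the target equals $\mathbf{1}_{D}$ itself. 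Since every $\varphi_{k}^{*}$ is $\nu^\Phi$-a.e. constant, the limit $\mathbf{1}_{D}$ is $\nu^\Phi$-a.e. constant and $\nu^\Phi(D) \in \{0,1\}$. The only genuinely nontrivial step is the $y$-independence of $\varphi^{*}$ on $F \times X$, which is exactly the ingredient extracted in Proposition \ref{pro1}; everything else is routine bookkeeping from standard ergodic theory.
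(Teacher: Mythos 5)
Your proof is correct and follows essentially the same route as the paper: the heart in both cases is the telescoping estimate showing that Birkhoff averages of Lipschitz test functions over $F\times X$ are independent of the fibre coordinate, followed by pushing the resulting invariant function down to $\Omega$ via $\pi$ and invoking ergodicity of $(\beta,\nu)$. The only divergence is in the closing approximation step --- the paper passes from Lipschitz to continuous test functions by a uniform $2/k$ estimate and then uses constancy of averages of continuous functions on a compact metric space, whereas you approximate $\mathbf{1}_{D}$ in $L^{1}(\nu^{\Phi})$ and use the $L^{1}$ contractivity of the ergodic averaging operator; both finishes are standard and valid.
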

\begin{proof}
	Let $f:\Omega \times X \rightarrow \mathbb{R}$ be a continuous function. Since the Lipschitz functions are dense in the space of the continuous functions on $\Omega \times X$ there exists a sequence of Lipschitz functions $f_n$ such that $||f_n -f||_{\infty} < 1/n$. By Birkhoff's ergodic theorem  there exists a Borel set $A$ with $  \nu^{\Phi}(A) = 1$ such that if $(\sigma,x) \in A$ then
	\begin{eqnarray*}
		f_n^{*}(\sigma,x)&=& \displaystyle\lim_{n\to+\infty}  \displaystyle\frac{1}{k}\displaystyle\sum_{j=0}^{k-1} f_n(\Phi^{j}(\sigma,x))
	\end{eqnarray*}
	exists for every $n \in \mathbb{N}$ and 
	\begin{eqnarray*}
		f^{*}(\sigma,x)&=& \displaystyle\lim_{k\to+\infty}  \displaystyle\frac{1}{k}\displaystyle\sum_{j=0}^{k-1} f(\Phi^{j}(\sigma,x))
	\end{eqnarray*}
	also exists. Consider the set $B = A \cap (F \times X)$. Since $\nu^{\Phi}(F \times X) = \nu(F) = 1$ it follows that $\nu^{\Phi}(B) = 1$. Using the same proof of Proposition \ref{pro1} for each $n \in \mathbb{N}$, if $(\sigma,x) \in B$ then $f_n^{*}(\sigma, y)$ exists, for all $y \in X$ and $ f_n^{*}(\sigma, y) = f_n^{*}(\sigma, x)         $.
	
	We claim that if $(\sigma,x) \in B$ then $f^{*}(\sigma, y)$ exists, for all $y \in X$ and $ f^{*}(\sigma, y) = f^{*}(\sigma, x)         $. In fact, let 
	$$     S_n(\sigma,x) = 	\Big|   \displaystyle\frac{1}{n}\displaystyle\sum_{j=0}^{n-1} f(\Phi^{j}(\sigma, x)) - \displaystyle\frac{1}{n}\displaystyle\sum_{j=0}^{n-1} f(\Phi^{j}(\sigma, y))                       \Big|.                              $$	
We have	
	\begin{eqnarray*}
		S_n(\sigma,x)
		&\leq&  \displaystyle\frac{1}{n} \Big|   \displaystyle\sum_{j=0}^{n-1} f(\Phi^{j}(\sigma, x)) - \displaystyle\sum_{j=0}^{n-1} f_k(\Phi^{j}(\sigma, x))                       \Big|                                             \\
		&+&  \Big| \displaystyle\frac{1}{n}  \displaystyle\sum_{j=0}^{n-1} f_k(\Phi^{j}(\sigma, x)) -\displaystyle\frac{1}{n} \displaystyle\sum_{j=0}^{n-1} f_k(\Phi^{j}(\sigma, y))                       \Big|                                             \\
		&+&\Big| \displaystyle\frac{1}{n}  \displaystyle\sum_{j=0}^{n-1} f_k(\Phi^{j}(\sigma, y)) -\displaystyle\frac{1}{n} \displaystyle\sum_{j=0}^{n-1} f(\Phi^{j}(\sigma, y))                       \Big|                                             \\
		&\leq&  \displaystyle\frac{2}{k} + \Big| \displaystyle\frac{1}{n}  \displaystyle\sum_{j=0}^{n-1} f_k(\Phi^{j}(\sigma, x)) -\displaystyle\frac{1}{n} \displaystyle\sum_{j=0}^{n-1} f_k(\Phi^{j}(\sigma, y))                       \Big|.  \\
	\end{eqnarray*}
	Given $\epsilon > 0$, take $k$ such that $1/k < \epsilon/2$. Since $ f_k^{*}(\sigma, y) = f_k^{*}(\sigma, x)         $  there exists $n_0$ such that if $n \geq n_0$ then 
	$$  \Big| \displaystyle\frac{1}{n}  \displaystyle\sum_{j=0}^{n-1} f_k(\Phi^{j}(\sigma, x)) -\displaystyle\frac{1}{n} \displaystyle\sum_{j=0}^{n-1} f_k(\Phi^{j}(\sigma, y))                       \Big| < \displaystyle\frac{\epsilon}{2}.                                                  $$
	Consequently, if $n \geq n_0$ then $$\Big|   \displaystyle\frac{1}{n}\displaystyle\sum_{j=0}^{n-1} f(\Phi^{j}(\sigma, x)) - \displaystyle\frac{1}{n}\displaystyle\sum_{j=0}^{n-1} f(\Phi^{j}(\sigma, y))                       \Big|  \leq \epsilon$$
	
	Therefore, $ f^{*}(\sigma, y) = f^{*}(\sigma, x)         $. Now consider the set  
	$$\Omega^{*} = \{\sigma \in F: {\rm there \ \ exists}\, \ \  x \in X  \ \ {\rm such \ \ that}\,  \ \ f^{*}(\sigma, x) \ \ {\rm is}\, \ \ \rm{defined}\}.$$

	We claim that $\nu(\Omega^{*}) =  \nu(F) =  1$. In fact, let us suppose that for some $D \subset F$, with $\nu(D) > 0$, if $\sigma \in D$ then  $f^{*}(\sigma, x) $ do not exist for all $x \in X$. Observe that $\nu^{\Phi}(D \times X) = \nu(D) > 0$, but this is an absurd because $\nu^{\Phi}(B) = 1$.

	
	Now consider a measurable function $g: \Omega \rightarrow \mathbb{R}$, where $g(\sigma) = f^{*} (\sigma, x)$ for any $ x \in X$, if $\sigma \in \Omega^{*} $. Observe that $g \circ \beta(\sigma) = g(\sigma)$ if  $\sigma \in \Omega^{*}$, from ergodicity of $(\beta, \nu)$ it follows that $g$ is constant for $\nu$-a.e $\sigma \in \Omega$. Hence $f^{*}$ is constant for   $\nu^{\Phi}$-a.e $(\sigma,x) \in \Omega \times X$. Since $ \Omega \times X$ is a compact metric space it follows that $\nu^{\Phi}$ is ergodic. 
	\end{proof}
Since the IFS $\omega$ is  $\mathbb{P}$-weakly hyperbolic by Lemma \ref{lemakey} it follows that $\mathbb{P}(F) = 1$. From ergodicity of $(\mathbb{P},\beta)$ and from Proposition \ref{pro2} it follows that $\mathbb{P}^{\Phi} = \mathbb{P} \times \mu_p$ is a ergodic measure with respect to $\Phi$.  Given a continuous function $f:X \rightarrow \mathbb{R}$, consider the continuous function $\varphi:\Omega \times X \rightarrow \mathbb{R}$ where $\varphi(\sigma,x) = f(x)$. By Birkhoff's ergodic
theorem for $(\mathbb{P}\times \mu_p)$-a.e. $(\sigma,x) \in \Omega \times X$ we have
$$    \displaystyle\lim_{n\to+\infty}  \displaystyle\frac{1}{n}\displaystyle\sum_{j=0}^{n-1} \varphi(\Phi^{j}(\sigma, x)) =    \displaystyle\lim_{n\to+\infty}    \displaystyle\frac{1}{n} \displaystyle\sum_{j=1}^{n} f(  \omega_{\sigma_j} \circ \cdot \cdot \cdot \circ \omega_{\sigma_1}(x)   ) = \displaystyle\int_{X} f \, d\mu.                                                            $$
On the other hand, by the proof of  Proposition  \ref{pro2} there exists a subset $\Omega^{*} \subset F$ with $\mathbb{P}(\Omega^{*} ) = 1$ such that if $\sigma \in \Omega^{*}$ then for any $x,y \in X$ we have
$$   \displaystyle\lim_{n\to+\infty}  \displaystyle\frac{1}{n}\displaystyle\sum_{j=0}^{n-1} \varphi(\Phi^{j}(\sigma, x))          = \displaystyle\lim_{n\to+\infty}  \displaystyle\frac{1}{n}\displaystyle\sum_{j=0}^{n-1} \varphi(\Phi^{j}(\sigma, y)) = \displaystyle\int_{X} f \, d\mu.                                                              $$ 
This concludes the proof of Ergodic theorem for $\mathbb{P}$-weakly hyperbolic IFS.
\section{Examples}
When the IFS is Weakly Hyperbolic by Lemma 2.2 of \cite{ASJ} it follows that \linebreak $S=G= \Omega$, however if the IFS is only $\mathbb{P}$-Weakly Hyperbolic it is possible that $\mathbb{P}(G) = 0$. We shall see in the example below an IFS where $\mathbb{P}(S) = 1$ but $\mathbb{P}(G) =0$.

The idea to do this example comes from discussions made by \"Oberg in \cite{O}. Consider the IFS where $\Lambda = \{0,1\}$, $X=[0,1]$, $\omega_0(x)=x/2$ and $\omega_1(x) = 2x$ for $0 \leq x \leq 1/2$, $\omega_1(x) = 1$ for $1/2 \leq x \leq 1$. 
\begin{center}
	\begin{tikzpicture}
	\draw[->] (0,0) -- (6,0) node[right] {$x$};
	\draw[->] (0,0) -- (0,6) node[above] {$y$};
	\draw[color=red] (0,0) -- (6,3);
	\draw[color=blue] (0,0) -- (3,6);
	\draw[color=blue] (3,6) -- (6,6);
	\draw[dashed] (0,6) -- (3,6);
	\draw[dashed] (6,6) -- (6,0);
	\draw[dashed] (3,0) -- (3,6);
	\draw[dashed] (0,3) -- (6,3);
	\fill[black] (3,0) circle (0.mm) node[below]
	 {$\frac{1}{2}$};
	 \fill[black] (0,0) circle (0.mm) node[below] {$0$};
	 \fill[black] (6,0) circle (0.mm) node[below] {$1$};
	 \fill[black] (0,3) circle (0.mm) node[left] {$\frac{1}{2}$};
	 \fill[red] (4,2) circle (0.mm) node[above] {$\omega_0$};
	 \fill[blue] (2,4) circle (0.mm) node[left] {$\omega_1$};
	\end{tikzpicture}
\end{center}
Let $p$ be the probability in $\Lambda$, where $p(\{0\}) = p(\{1\}) = 1/2$. We claim that $\mathbb{P}(G) = 0$. In fact, it is know that there exists a set $A \subset \Omega$ with $\mathbb{P}(A) =1$ such that if $\sigma \in A$ then there exists a sequence $n_k \rightarrow \infty$ such that $\displaystyle\sum_{j=1}^{n_k} (-1)^{\sigma_j} = 0$, see \cite{Bii} for example. Take $\sigma \in A$, if there exists a sequence $m_k \rightarrow \infty$ such that $Diam(\omega_{m_k} \circ \cdot \cdot \cdot \circ \omega_{1}([0,1])) \geq 1/2$ then $\sigma \notin G$. Suppose now that there exists $N_0$ such that if $n \geq N_0$ then $Diam(\omega_{n} \circ \cdot \cdot \cdot \circ \omega_{1}([0,1])) \leq 1/2.$

Observe that $0 \in \omega_{n} \circ \cdot \cdot \cdot \omega_{1}([0,1])$ for every $n$. Thus for $n \geq N_0$ we have that $\omega_{n} \circ \cdot \cdot \cdot \omega_{1}([0,1]) \subset [0,1/2]$. On the other hand,
if $x \in[0,1/2]$ then 
$$\omega_0 \circ \omega_1 (x) =\omega_1 \circ \omega_0 (x) = x .$$
Therefore for $n > N_0$ we have
$$  Diam(\omega_{n} \circ \cdot \cdot \cdot \circ \omega_{1}([0,1])) = 2^{r_1}2^{-r_0}  Diam(\omega_{N_0} \circ \cdot \cdot \cdot \circ \omega_{1}([0,1])),                                                    $$
where $r_1 = \# \{N_0 < j \leq n: \sigma_j = 1\}$ and $r_0 = \# \{N_0 < j \leq n: \sigma_j = 0\}$. Since $\sigma \in A$ there exists there exists a sequence $n_k \rightarrow \infty$ such that $\displaystyle\sum_{j=1}^{n_k} (-1)^{\sigma_j} = 0$ so for $n_k > N_0$ we have that $   Diam(\omega_{n_k} \circ \cdot \cdot \cdot \circ \omega_{1}([0,1])) \geq 2^{-N_0}   Diam(\omega_{N_0} \circ \cdot \cdot \cdot \circ \omega_{1}([0,1])).$
Therefore, $\sigma \notin G$ and so $\mathbb{P}(G) = 0$.

 Now we will show that $\mathbb{P}(S) =1$. It is know that for each $l \in \mathbb{Z}$ there exists a set $A_l$ with $\mathbb{P}(A_l) = 1$ such that if $\sigma \in A$ then there exists a sequence $n_k \rightarrow \infty$ such that $\displaystyle\sum_{j=1}^{n_k} (-1)^{\sigma_j} = l$, see \cite{Bii} for example. Take $\sigma \in \bigcap_{l=1}^{\infty} A_l$, for each $l >0$ there exists $n_l$ such that $\displaystyle\sum_{j=1}^{n_k} (-1)^{\sigma_j} = l$. Hence,
$  Diam(\omega_{1} \circ \cdot \cdot \cdot \circ \omega_{n_l}([0,1]))  \leq 2^{-l}.                     $
On the other hand, since $\displaystyle\lim_{n\to+\infty}  Diam(\omega_{1} \circ \cdot \cdot \cdot \circ \omega_{n}([0,1]))$ exists it follows that $\displaystyle\lim_{n\to+\infty}  Diam(\omega_{1} \circ \cdot \cdot \cdot \circ \omega_{n}([0,1])) = 0$. This proves that $\mathbb{P}(S) =1$ and by Lemma \ref{lemakey} also we have that $\mathbb{P}(F) =1$.

\end{document}